\newcommand{\K}{\Bbb K}
\newcommand{\C}{\Bbb C}
\newcommand{\Z}{\Bbb Z}
\newcommand{\N}{\Bbb N}
\newcommand{\Q}{\Bbb Q}
\def\NL{\hfill\break}
\def\ni{\noindent}
\newcommand{\set}[1]{\left\{#1\right\}}
\newtheorem{theo}{Theorem}[section]
\newtheorem{cor}[theo]{Corollary}
\newtheorem{lem}[theo]{Lemma}
\newtheorem{prop}[theo]{Proposition}
\newtheorem{ex}[theo]{Example}
\begin{document}
\title{On quasi-Baer rings of Ore extensions}
\date{}
\author{L'moufadal Ben Yakoub and Mohamed Louzari}
\address{L'moufadal Ben yakoub\\
Department of mathematics\\ University Abdelmalek Essaadi\\ B.P.
2121 Tetouan, Morocco} \email{benyakoub@hotmail.com}
\address{Mohamed Louzari\\
Department of mathematics\\ University Abdelmalek Essaadi\\ B.P.
2121 Tetouan, Morocco} \email{louzari\_mohamed@hotmail.com}

\normalsize
\begin{abstract}Let $R$ be a ring and $S=R[x;\sigma,\delta]$ its Ore
extension. We prove under some conditions that $R$ is a quasi-Baer
ring if and only if the Ore extension $R[x;\sigma,\delta]$ is a
quasi-Baer ring. Examples are provided to illustrate and delimit our
results.
\end{abstract}

\maketitle

\ni\footnote[0]{{\it Key words and phrases.} Ore extensions, Quasi-Baer
rings, Skew Armendariz rings.\NL This work was partially supported
by the integrated action Moroccan-Spanish 62/P/03.\NL This paper appeared in 
East-West J. of Mathematics: Vol. 8, No 2 (2006) pp. 119-127}

\section{Introduction} Throughout this paper, $R$ denotes
an associative ring with unity. For a subset $X$ of $R$,
$r_R(X)=\{a\in R|Xa=0\}$ and $\ell_R(X)=\{a\in R|aX=0\}$ will stand
for the right and the left annihilator of $X$ in $R$ respectively.
By \cite{Kaplansky}, a right annihilator of $X$ is always a right
ideal, and if $X$ is a right ideal then $r_R(X)$ is a two-sided
ideal. An Ore extension of a ring $R$ is denoted by
$R[x;\sigma,\delta]$, where $\sigma$ is an endomorphism of $R$ and
$\delta$ is a $\sigma$-derivation, i.e., $\delta\colon R\rightarrow
R$ is an additive map such that
$\delta(ab)=\sigma(a)\delta(b)+\delta(a)b$ for all $a,b\in R$.
Recall that elements of $R[x;\sigma,\delta]$ are polynomials in $x$
with coefficients written on the left. Multiplication in
$R[x;\sigma,\delta]$ is given by the multiplication in $R$ and the
condition $xa=\sigma(a)x+\delta(a)$, for all $a\in R$. We say that a
subset $X$ of $R$ is $(\sigma,\delta)$-{\it stable} if
$\sigma(X)\subseteq X$ and $\delta(X)\subseteq X$. A ring $R$ is
({\it quasi})-{\it Baer} if the right annihilator of every nonempty
subset (every right ideal) of $R$ is generated by an idempotent.
From \cite{birk1}, an idempotent $e\in R$ is left (resp. right) {\it
semicentral} in $R$ if $exe=xe$ (resp. $exe=ex$), for all $x\in R$.
Equivalently, $e^2=e\in R$ is left (resp. right) semicentral if $eR$
(resp. $Re$) is an ideal of $R$. Since the right annihilator of a
right ideal is an ideal, we see that the right annihilator of a
right ideal is generated by a left semicentral in a quasi-Baer ring.
We use $\mathcal{S}_\ell(R)$ and $\mathcal{S}_r(R)$ for the sets of
all left and right semicentral idempotents, respectively. Also note
$\mathcal{S}_\ell(R)\cap \mathcal{S}_r(R)=\mathcal{B}(R)$, where
$\mathcal{B}(R)$ is the set of all central idempotents of $R$. If
$R$ is a semiprime ring then
$\mathcal{S}_\ell(R)=\mathcal{S}_r(R)=\mathcal{B}(R)$. Recall that
$R$ is a {\it reduced} ring if it has no nonzero nilpotent elements.
A ring $R$ is {\it abelian} if every idempotent of $R$ is central.
We can easily observe that every reduced ring is abelian. \par
According to \cite{krempa}, an endomorphism $\sigma$ of a ring $R$
is said to be {\it rigid} if $a\sigma(a)=0$ implies $a=0$ for all
$a\in R$. We call a ring $R$ $\sigma$-{\it rigid} if there exists a
rigid endomorphism $\sigma$ of $R$. Following Hashemi and Moussavi
\cite{Hashemi/polext}, a ring $R$ is $\sigma$-{\it compatible} if
for each $a,b\in R$, $a\sigma(b)=0\Leftrightarrow ab=0$. Moreover,
$R$ is said to be $\delta$-{\it compatible} if for each $a,b\in R$,
$ab=0\Rightarrow a\delta(b)=0$. If $R$ is both $\sigma$-compatible
and $\delta$-compatible, we say that $R$ is $(\sigma,\delta)$-{\it
compatible}. A ring $R$ is $\sigma$-rigid if and only if $R$ is
$(\sigma,\delta)$-compatible and reduced
\cite[Lemma~2.2]{Hashemi/polext}. Also, if $R$ is $\sigma$-rigid
then $R[x;\sigma,\delta]$ is reduced \cite[Theorem~3.3]{krempa}.
From \cite{hong2}, a ring $R$ is said to be a $\sigma$-{\it skew
Armendariz } ring if for $p=\sum_{i=0}^{n}a_ix^i$ and
$q=\sum_{j=0}^{m}b_jx^j$ in $R[x;\sigma]$, $pq=0$ implies
$a_i\sigma^i(b_j)=0$ for all $0\leq i\leq n$ and $0\leq j\leq m$.
From \cite{Hashemi/skewarm}, a ring $R$ is called an
$(\sigma,\delta)$-{\it skew Armendariz } ring if for
$p=\sum_{i=0}^{n}a_ix^i$ and $q=\sum_{j=0}^{m}b_jx^j$ in
$R[x;\sigma,\delta]$, $pq=0$ implies $a_ix^ib_jx^j=0$ for each
$i,j$. Note that $(\sigma,\delta)$-skew Armendariz rings are
generalization of $\sigma$-skew Armendariz rings, $\sigma$-rigid
rings and Armendariz rings, see \cite{hong2}, for more details. It
was proved in \cite[Corollary~12]{hong1}, that if $R$ is a
$\sigma$-rigid ring then $R[x;\sigma,\delta]$ is a quasi-Baer ring
if and only if $R$ is quasi-Baer.  Also in
\cite[Corollary~2.8]{Hashemi/polext}, it was shown that, if $R$ is
$(\sigma,\delta)$-compatible, then $R[x;\sigma,\delta]$ is a
quasi-Baer ring if and only if $R$ is quasi-Baer.\\

\par The aim of this paper is to show that if $R$ is an $(\sigma,\delta)$-skew
Armendariz ring with $\sigma$ an automorphism such that $Re$ is
$(\sigma,\delta)$-stable for all $e\in\mathcal{S}_\ell(R)$, then $R$
is a quasi-Baer ring if and only if $R[x;\sigma,\delta]$ is a
quasi-Baer ring. Many examples are provided to illustrate and
delimit results and to show that they are not consequences of
\cite[Corollary 2.8]{Hashemi/polext}. Moreover, we obtain a partial
generalization of \cite[Corollary 12]{hong1}.

\section{Preliminaries and Examples}
For any $0\leq i\leq j\;(i,j\in \N)$, $f_i^j\in End(R,+)$ will
denote the map which is the sum of all possible words in
$\sigma,\delta$ built with $i$ letters $\sigma$ and $j-i$ letters
$\delta$ (e.g., $f_n^n=\sigma^n$ and $f_0^n=\delta^n, n\in \N $).
The next lemma appears in \cite[Lemma 4.1]{Lam}.
\begin{lem}\label{lemma1}
For any $n\in \N$ and $r\in R$ we have
$x^nr=\sum\limits_{i=0}^nf_i^n(r)x^i$ in the ring
$R[x;\sigma,\delta]$.
\end{lem}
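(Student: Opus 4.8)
The natural approach is induction on $n$. The case $n=0$ is the trivial identity $r=f_0^0(r)$, where $f_0^0$ is the empty word, i.e.\ the identity map; the case $n=1$ is precisely the defining commutation rule $xr=\sigma(r)x+\delta(r)$ of $R[x;\sigma,\delta]$, since $f_1^1=\sigma$ and $f_0^1=\delta$.

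Before carrying out the inductive step I would isolate the recursion governing the operators $f_i^j$. Reading a word of length $n+1$ in the alphabet $\{\sigma,\delta\}$ from the left, its first letter is either $\sigma$, followed by a word of length $n$ containing $i-1$ copies of $\sigma$, or $\delta$, followed by a word of length $n$ containing $i$ copies of $\sigma$. Summing over all words with exactly $i$ occurrences of $\sigma$ therefore gives $f_i^{n+1}=\sigma\circ f_{i-1}^n+\delta\circ f_i^n$ for $0\le i\le n+1$, under the convention $f_{-1}^n=f_{n+1}^n=0$; the boundary instances of this are $f_0^{n+1}=\delta\circ\delta^n=\delta^{n+1}$ and $f_{n+1}^{n+1}=\sigma\circ\sigma^n=\sigma^{n+1}$.

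Assuming $x^nr=\sum_{i=0}^nf_i^n(r)x^i$, I would multiply on the left by $x$ and push $x$ past each coefficient using $xa=\sigma(a)x+\delta(a)$ with $a=f_i^n(r)$, obtaining $x^{n+1}r=\sum_{i=0}^n\sigma(f_i^n(r))x^{i+1}+\sum_{i=0}^n\delta(f_i^n(r))x^i$. Shifting the index in the first sum and collecting terms, the coefficient of $x^i$ equals $\sigma(f_{i-1}^n(r))+\delta(f_i^n(r))$ for each $0\le i\le n+1$ (with the convention above disposing of the two endpoint terms), which is exactly $f_i^{n+1}(r)$ by the recursion. This closes the induction.

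The computation is routine; essentially the only thing that needs attention is the combinatorial recursion for the $f_i^j$ together with the correct handling of the two boundary indices $i=0$ and $i=n+1$, where one of the two summands disappears.
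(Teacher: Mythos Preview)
Your inductive argument is correct and is the standard proof of this commutation formula. Note, however, that the paper does not supply its own proof of this lemma at all: it merely cites \cite[Lemma~4.1]{Lam}. Your proof is precisely the routine induction one would expect (and is essentially what appears in the cited reference), so there is nothing to compare beyond observing that you have filled in the details the paper omits.
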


\begin{lem}\label{lemma2}\cite[Lemma 5]{Hashemi/skewarm}. Let $R$ be an $(\sigma,\delta)$-skew Armendariz ring. If
$e^2=e\in R[x;\sigma,\delta]$ where
$e=e_0+e_1x+e_2x^2+\cdots+e_nx^n$, then $e=e_0$.
\end{lem}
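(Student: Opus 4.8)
The plan is to exploit the $(\sigma,\delta)$-skew Armendariz hypothesis applied to the two obvious factorizations of the zero element $e^2-e$, and to read off the coefficients $e_1,\dots,e_n$ directly from the cheapest terms produced. Throughout, set $S=R[x;\sigma,\delta]$ and recall that $S$ is free as a left $R$-module on the powers of $x$, so a monomial $cx^j$ vanishes in $S$ if and only if $c=0$ in $R$.

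First I would note that $e^2=e$ yields \emph{both} $e(e-1)=0$ and $(e-1)e=0$ in $S$ (here $1$ is central in $S$, so $e-1=(e_0-1)+e_1x+\cdots+e_nx^n$). Write $p=e=\sum_{i=0}^{n}e_ix^i$ and $q=e-1=\sum_{j=0}^{n}b_jx^j$, where $b_0=e_0-1$ and $b_j=e_j$ for $1\le j\le n$. Applying the $(\sigma,\delta)$-skew Armendariz property to $pq=e(e-1)=0$ gives $e_ix^i\,b_jx^j=0$ for all $i,j$; I only need the instances $i=0$, which read $e_0b_jx^j=0$, hence $e_0b_j=0$, so in particular $e_0e_j=0$ for every $j\ge 1$. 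Next, applying the property to $qp=(e-1)e=0$ gives $b_ix^i\,e_jx^j=0$ for all $i,j$, and the instances $i=0$ read $(e_0-1)e_jx^j=0$, hence $e_0e_j=e_j$ for every $j$. Comparing the two conclusions, $e_j=e_0e_j=0$ for each $j=1,\dots,n$, so $e=e_0$; that $e_0^2=e_0$ is then automatic from $e^2=e$.

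I do not expect a serious obstacle here: the one idea needed is to use \emph{both} products $e(e-1)$ and $(e-1)e$, since each skew Armendariz relation only controls $e_0e_j$ from one side, and it is the combination ``$e_0e_j=0$ and $e_0e_j=e_j$'' that forces $e_j=0$. Note that Lemma~\ref{lemma1} is not even required for this argument, only the left-module freeness of $S$ over $R$. (Alternatively, one could instead show that the leading coefficient satisfies $e_n=e_0e_n=0$ by the same two-sided trick and then descend on $n$, but the direct argument above disposes of all coefficients $e_1,\dots,e_n$ simultaneously.)
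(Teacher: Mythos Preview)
Your argument is correct. The two applications of the $(\sigma,\delta)$-skew Armendariz property to $e(e-1)=0$ and $(e-1)e=0$, read off at $i=0$, yield $e_0e_j=0$ and $e_0e_j=e_j$ for all $j\ge 1$, and hence $e_j=0$; there is no gap.

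As for the comparison: the present paper does not supply a proof of this lemma at all --- it is simply quoted from \cite[Lemma~5]{Hashemi/skewarm}. So there is no ``paper's own proof'' to set against yours. Your proof is self-contained and does not even need Lemma~\ref{lemma1}, which is a mild bonus. The alternative you mention at the end (killing the leading coefficient first and descending on the degree) is closer in spirit to how such lemmas are often proved in the literature, but your simultaneous argument is cleaner.
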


\begin{lem}\label{lemma3}Let $R$ be a ring, $\sigma$ an endomorphism and $\delta$
be a $\sigma$-derivation of $R$. Then $\sigma(Re)\subseteq Re$
implies $\delta(Re)\subseteq Re$ for all $e\in \mathcal{B}(R)$.
\end{lem}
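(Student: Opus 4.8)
The plan is to reduce the inclusion $\delta(Re)\subseteq Re$ to the single identity $\delta(e)=\delta(e)e$, and then to extract that identity from $e^{2}=e$ together with the hypothesis. The starting observation is that for an idempotent $e$ one has $Re=\{a\in R:ae=a\}$. Consequently the hypothesis $\sigma(Re)\subseteq Re$ amounts to $\sigma(e)e=\sigma(e)$ (necessity by putting $r=1$; sufficiency since $\sigma(re)=\sigma(r)\sigma(e)=\sigma(r)\sigma(e)e\in Re$ for all $r$), and, because $e$ is central, this in turn gives $e\sigma(e)=\sigma(e)$. The same description of $Re$ shows that it suffices to prove $\delta(e)=\delta(e)e$: granting this,
\[
\delta(re)=\sigma(r)\delta(e)+\delta(r)e=\sigma(r)\delta(e)e+\delta(r)e\in Re
\]
for every $r\in R$, which is exactly $\delta(Re)\subseteq Re$.

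Next I would apply $\delta$ to $e=e^{2}$, using $\delta(ab)=\sigma(a)\delta(b)+\delta(a)b$, to get $\delta(e)=\sigma(e)\delta(e)+\delta(e)e$, i.e. $\delta(e)(1-e)=\sigma(e)\delta(e)$. The decisive step is to multiply this relation on the left by $e$ and substitute back the identities already obtained:
\[
\delta(e)(1-e)=\sigma(e)\delta(e)=e\sigma(e)\delta(e)=e\,\delta(e)(1-e)=\delta(e)\,e(1-e)=0 ,
\]
where the second equality uses $e\sigma(e)=\sigma(e)$, the third uses $\sigma(e)\delta(e)=\delta(e)(1-e)$ once more, the fourth uses that $e$ is central (so it may be moved past $\delta(e)$), and the last uses $e(1-e)=0$. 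Hence $\delta(e)=\delta(e)e$, and the reduction of the first paragraph completes the proof.

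There is no real analytic obstacle, since every computation is a one-line idempotent manipulation; the only thing that needs care is the bookkeeping of where the hypothesis $e\in\mathcal{B}(R)$ is actually used. Centrality enters exactly twice — once to pass from $\sigma(e)e=\sigma(e)$ to $e\sigma(e)=\sigma(e)$, and once to slide $e$ through $\delta(e)$ in the penultimate equality — and it is the second use, together with $e(1-e)=0$, that forces $\delta(e)(1-e)$ to vanish. So the one point to ``see'' is that the relation $\delta(e)(1-e)=\sigma(e)\delta(e)$ should be hit on the \emph{left} by $e$, rather than manipulated on the right.
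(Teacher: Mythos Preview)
Your argument is correct. Both proofs are short idempotent computations, but they are organized differently. You first reduce the whole claim to the single identity $\delta(e)e=\delta(e)$ and then extract that identity from $\delta(e^{2})=\delta(e)$ together with $\sigma(e)=e\sigma(e)$; this isolates the statement ``$\delta(e)\in Re$'' as a clean intermediate lemma and makes the two uses of centrality explicit. The paper instead writes $re=ere$ and applies $\delta$ to $ere$ directly: the hypothesis enters through $\sigma(er)=\sigma(ere)\in Re$, producing a factor $e$ in front of $\delta(e)$, and then only the trivial consequence $e\delta(e)=e\delta(e)e$ of centrality is needed. The paper's route is a line shorter and never needs the stronger fact $\delta(e)\in Re$; your route is a bit more conceptual and yields that extra fact as a byproduct.
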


\begin{proof}Let $e\in \mathcal{B}(R)$ and $r\in R$.
Then
$\delta(re)=\delta(ere)=\sigma(er)\delta(e)+\delta(er)e=\sigma(ere)\delta(e)+\delta(er)e=
se\delta(e)+\delta(er)e$, for some $s\in R$, but $e\in
\mathcal{B}(R)$, then $e\delta(e)=e\delta(e)e$, so
$\delta(re)=(se\delta(e)+\delta(er))e$. Therefore
$\delta(Re)\subseteq Re$.
\end{proof}

\begin{lem}\label{lemma4}Let $R$ be a ring, $\sigma$ an endomorphism of $R$
and $\delta$ be a $\sigma$-derivation of $R$. If $R$ is
$(\sigma,\delta)$-compatible. Then for $a,b\in R$, $ab=0 \Rightarrow
af_i^j(b)=0$ for all $j\geq i\geq 0$.
\end{lem}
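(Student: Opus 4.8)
The plan is to induct on $j$, the total number of letters in the word $f_i^j$. First I would record the two base facts we are allowed to use: $(\sigma,\delta)$-compatibility gives, for $a,b\in R$ with $ab=0$, both $a\sigma(b)=0$ (from $\sigma$-compatibility, since $ab=0 \Rightarrow a\sigma(b)=0$ is the forward direction of the equivalence $ab=0\Leftrightarrow a\sigma(b)=0$) and $a\delta(b)=0$ (from $\delta$-compatibility directly). So the claim holds for $j=0$ (where $f_0^0=\mathrm{id}$) and for $j=1$ (where $f_0^1=\delta$ and $f_1^1=\sigma$).

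For the inductive step, I would use the recursive structure of the $f_i^j$'s. Each word of length $j+1$ in $\sigma,\delta$ with exactly $i$ occurrences of $\sigma$ is obtained from a word of length $j$ by appending one more letter on the left: appending $\sigma$ to a word with $i-1$ letters $\sigma$, or appending $\delta$ to a word with $i$ letters $\sigma$. Summing over all such words yields the identity $f_i^{j+1} = \sigma \circ f_{i-1}^{j} + \delta \circ f_{i}^{j}$ (with the convention $f_{-1}^{j}=0$ and $f_{j+1}^{j}=0$). Now suppose $ab=0$. By the induction hypothesis, $a f_{i-1}^{j}(b)=0$ and $a f_{i}^{j}(b)=0$. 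Applying $(\sigma,\delta)$-compatibility to each of these equations — $\sigma$-compatibility to the first and $\delta$-compatibility to the second — gives $a\,\sigma(f_{i-1}^{j}(b))=0$ and $a\,\delta(f_{i}^{j}(b))=0$. Adding these and using the recursion yields $a f_i^{j+1}(b) = a\,\sigma(f_{i-1}^{j}(b)) + a\,\delta(f_{i}^{j}(b)) = 0$, completing the induction.

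I do not expect a serious obstacle here; the only point requiring a little care is making the recursion $f_i^{j+1} = \sigma\circ f_{i-1}^{j} + \delta\circ f_{i}^{j}$ precise and checking the boundary conventions, but this is exactly the "append a letter on the left" decomposition of words and is routine. One could alternatively give a slicker argument via Lemma~\ref{lemma1}: from $ab=0$ one shows $a\cdot x^j b = 0$ in $R[x;\sigma,\delta]$ (multiplying the relation $x^j b = \sum_{i=0}^{j} f_i^j(b) x^i$ on the left by $a$ after first verifying $a x = \sigma^{-1}$-type manipulations are unnecessary), but since $a$ is a coefficient and the $f_i^j(b)x^i$ are the homogeneous components, one would still need to argue the components vanish individually — which again reduces to the compatibility-driven induction above. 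I would therefore present the direct inductive proof as the cleanest route.
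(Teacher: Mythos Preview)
Your inductive argument is correct and is essentially the same idea as the paper's proof: both rest on iterating $(\sigma,\delta)$-compatibility to kill $a$ applied to any word in $\sigma,\delta$ evaluated at $b$. The paper compresses this into two lines (noting $a\sigma^i(b)=a\delta^j(b)=0$ and then asserting $af_i^j(b)=0$), whereas you spell out the recursion $f_i^{j+1}=\sigma\circ f_{i-1}^{j}+\delta\circ f_i^{j}$ explicitly, which is a cleaner justification of that final step.
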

\begin{proof}If $ab=0$, then $a\sigma^i(b)=a\delta^j(b)=0$ for all
$i\geq0$ and $j\geq 0$, because $R$ is $(\sigma,\delta)$-compatible.
Then $af_i^j(b)=0$ for all $i,j$.
\end{proof}

\begin{lem}\label{lemma5}Let $R$ be a ring, $\sigma$ an endomorphism of $R$
and $\delta$ be a $\sigma$-derivation of $R$. If $R$ is
$\sigma$-rigid then $R$ is $(\sigma,\delta)$-skew Armendariz.
\end{lem}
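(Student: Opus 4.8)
The plan is to use the two facts about $\sigma$-rigid rings recalled in the Introduction: if $R$ is $\sigma$-rigid, then $R$ is $(\sigma,\delta)$-compatible and reduced, and moreover $S:=R[x;\sigma,\delta]$ is reduced. Fix $p=\sum_{i=0}^{n}a_ix^i$ and $q=\sum_{j=0}^{m}b_jx^j$ in $S$ with $pq=0$; we must show $a_ix^ib_jx^j=0$ for all $i,j$. First I would reduce this to a statement about coefficients in $R$. By Lemma~\ref{lemma1}, $a_ix^ib_jx^j=\sum_{k=0}^{i}a_if_k^i(b_j)x^{k+j}$, so $a_ix^ib_jx^j=0$ as soon as $a_if_k^i(b_j)=0$ for all $0\le k\le i$; and since $R$ is $(\sigma,\delta)$-compatible, Lemma~\ref{lemma4} gives $a_ib_j=0\Rightarrow a_if_k^i(b_j)=0$. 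Hence it is enough to prove that $pq=0$ forces $a_ib_j=0$ for all $i,j$.

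I would prove this last claim by the usual Armendariz-type downward induction, which is available because $R$ is reduced. Expanding with Lemma~\ref{lemma1}, $0=pq=\sum_{i,j,k}a_if_k^i(b_j)x^{k+j}$, so the coefficient $\sum_{k+j=\ell,\ k\le i}a_if_k^i(b_j)$ is zero for every $\ell$. Induct downward on $\ell$ from $n+m$, with induction hypothesis ``$a_sb_t=0$ whenever $s+t>\ell$''. In the degree-$\ell$ relation, any summand with $k<i$ has $j=\ell-k$ and $i+j=i+\ell-k>\ell$, so $a_ib_{\ell-k}=0$ by hypothesis, hence $a_if_k^i(b_j)=0$ by Lemma~\ref{lemma4}; thus the relation collapses to $\sum_{i+j=\ell}a_i\sigma^i(b_j)=0$, the exact analogue of the identity $\sum_{i+j=\ell}a_ib_j=0$ in an ordinary polynomial ring. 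From here I would run the standard reduced-ring argument: left-multiply this relation by the $\sigma^{i}$-image of the surviving $b_j$ of largest index, use that $a_sb_t=0$ (already known at higher degree) together with the fact that in a reduced $(\sigma,\delta)$-compatible ring $ab=0$ implies $ba=0$ and $u(a)v(b)=0$ for all words $u,v$ in $\sigma,\delta$, to kill the remaining terms, and deduce an identity of the form $(a_{i}\sigma^{i}(b_{j}))^2=0$; reducedness gives $a_{i}\sigma^{i}(b_{j})=0$ and then compatibility gives $a_{i}b_{j}=0$. Repeating this for the next-largest index, and so on, yields $a_ib_j=0$ for all $(i,j)$ with $i+j=\ell$, completing the induction. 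Combined with the reduction above, this shows $a_ix^ib_jx^j=0$ for all $i,j$, i.e. $R$ is $(\sigma,\delta)$-skew Armendariz.

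The only real work is bookkeeping: unlike in an ordinary polynomial ring, a single monomial $a_ix^ib_jx^j$ contributes to every degree from $j$ up to $i+j$ through the maps $f_k^i$, so the coefficient of a fixed power of $x$ in $pq$ mixes many pairs $(i,j)$. What dissolves this is Lemma~\ref{lemma4}: once the plain products $a_sb_t$ coming from higher degrees are known to vanish, all the $f_k^i$-cross terms at the current degree vanish as well, and the induction proceeds exactly as in the classical proof that a reduced ring is Armendariz. I would therefore organize the inductive step in the familiar two-index ``peeling'' form --- first clearing the summands that involve the top $b_j$, then those involving the next, and so on.
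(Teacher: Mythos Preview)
Your argument is correct and follows the same three-step structure as the paper's proof: (i) from $pq=0$ deduce $a_ib_j=0$ for all $i,j$; (ii) use $(\sigma,\delta)$-compatibility (Lemma~\ref{lemma4}) to get $a_if_k^i(b_j)=0$; (iii) conclude $a_ix^ib_jx^j=0$ via Lemma~\ref{lemma1}. The only difference is that for step~(i) the paper simply invokes \cite[Proposition~6]{hong1}, whereas you reprove that proposition by the standard reduced-ring downward induction; your sketch of that induction is accurate, so the proposal is fine as written.
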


\begin{proof}If $R$ is $\sigma$-rigid then $R$ is
$(\sigma,\delta)$-compatible by \cite[Lemma~2.2]{Hashemi/polext}.
Let $f=\sum_{i=0}^na_ix^i,\;g=\sum_{j=0}^mb_jx^j\in
R[x;\sigma,\delta]$ such that $fg=0$, then $a_ib_j=0$ for all $i,j$,
by \cite[Proposition~ 6]{hong1}. So $a_if_{\ell}^j(b_j)=0$, for all
$0\leq\ell\leq i\leq n,\;0\leq j\leq m$, by Lemma \ref{lemma4}.
Hence $a_ix^ib_jx^j=\sum_{\ell=0}^ia_if_{\ell}^j(b_j)x^{\ell+j}=0$.
Therefore $R$ is $(\sigma,\delta)$-skew Armendariz.
\end{proof}

The next example illustrates that there exists a ring $R$ and an
automorphism $\sigma$ of $R$ such that $Re$ is $\sigma$-stable for
all $e\in \mathcal{S}_{\ell}(R)$, but $R$ is not $\sigma$-rigid.

\begin{ex}\label{ex1}\cite[Example~1]{hong2}. Consider the ring $$R=\set{\begin{pmatrix}
  a & t \\
  0 & a
\end{pmatrix}| a\in \Z\;,t\in \Q},$$ where $\Z$ and $\Q$ are the
set of all integers and all rational numbers, respectively. The ring
$R$ is commutative, let $\sigma\colon R\rightarrow R$ be an
automorphism defined by $\sigma\left(\begin{pmatrix}
  a & t \\
  0 & a
\end{pmatrix}\right)=\begin{pmatrix}
  a & t/2 \\
  0 & a
\end{pmatrix}$.\NL
$(1)$ $R$ is not $\sigma$-rigid.\NL $\begin{pmatrix}
  0 & t \\
  0 & 0
\end{pmatrix}\sigma\left(\begin{pmatrix}
  0 & t \\
  0 & 0
\end{pmatrix}\right)=0$, but $\begin{pmatrix}
  0 & t \\
  0 & 0
\end{pmatrix}\neq 0$, if $t\neq 0$.\NL
$(2)$ $\sigma(Re)\subseteq Re$ for all $e\in \mathcal{S}_{\ell}(R)$.
$R$ has only two idempotents:\NL
 $e_0=\begin{pmatrix}
  0 & 0 \\
  0 & 0
\end{pmatrix}$ end $e_1=\begin{pmatrix}
 1 & 0 \\
  0 & 1
\end{pmatrix}$, let $r=\begin{pmatrix}
  a & t \\
  0 & a
\end{pmatrix}\in R$, we have $\sigma(re_0)\in Re_0$ and $\sigma(re_1)\in
Re_1$.
\end{ex}

Also we have an example of an endomorphism $\sigma$ of a ring $R$
such that $Re$ is $\sigma$-stable for all $e\in
\mathcal{S}_{\ell}(R)$ and $R$ is not $\sigma$-compatible.

\begin{ex}\label{ex2}Let $\K$ be a field and $R=\K[t]$ a polynomial ring over $\K$
with the endomorphism $\sigma$ given by $\sigma(f(t))=f(0)$  for all
$f(t)\in R$.\NL$(1)$ $R$ is not $\sigma$-compatible (so not
$\sigma$-rigid). Take $f=a_0+a_1t+a_2t^2+\cdots+a_nt^n$ and
$g=b_1t+b_2t^2+\cdots+b_mt^m$, since $g(0)=0$ so, $f\sigma(g)=0$,
but $fg\neq 0$.\NL$(2)$ $R$ has only two idempotents $0$ and $1$ so
$Re$ is $\sigma$-stable for all $e\in\mathcal{S}_\ell(R)$.
\end{ex}

There is an example of a ring $R$ and an endomorphism $\sigma$ of
$R$ such that $R$ is $\sigma$-skew Armendariz and $R$ is not
$\sigma$-compatible.

\begin{ex}\label{ex3}Consider a ring of polynomials over $\Z_2$,
$R=\Z_2[x]$. Let $\sigma\colon R\rightarrow R$ be an endomorphism
defined by $\sigma(f(x))=f(0)$. Then: \NL $(i)$ $R$ is not
$\sigma$-compatible. Let $f=\overline{1}+x$, $g=x\in R$, we have
$fg=(\overline{1}+x)x\neq 0$, however
$f\sigma(g)=(\overline{1}+x)\sigma(x)=0$.\NL $(ii)$ $R$ is
$\sigma$-skew Armendariz \cite[Example~5]{hong2}.
\end{ex}

In the next example, $S=R/I$ is a ring and $\overline{\sigma}$ an
endomorphism of $S$ such that $S$ is $\overline{\sigma}$-compatible
and not $\overline{\sigma}$-skew Armendariz.

\begin{ex}\label{ex4} Let $\Z$ be the ring of integers and $\Z_2$ be
the ring of integers modulo 4. Consider the ring
$$R=\set{\begin{pmatrix}
  a & \overline{b} \\
  0 & a
\end{pmatrix}| a\in\Z\;,\overline{b}\in \Z_4}.$$  Let $\sigma\colon R\rightarrow R$ be an
endomorphism defined by $\sigma\left(\begin{pmatrix}
  a & \overline{b} \\
  0 & a
\end{pmatrix}\right)=\begin{pmatrix}
  a & -\overline{b} \\
  0 & a
\end{pmatrix}$.\NL Take the ideal $I=\set{\begin{pmatrix}
  a & \overline{0} \\
  0 &a
\end{pmatrix}| a\in 4\Z}$ of $R$. Consider the factor ring $$R/I\cong\set{\begin{pmatrix}
  \overline{a} & \overline{b} \\
  0 &\overline{a}
\end{pmatrix}| \overline{a},\overline{b}\in 4\Z}.$$ \NL$(1)$ $R/I$ is not
$\overline{\sigma}$-skew Armendariz. In fact, $\left(\begin{pmatrix}
 \overline{2} & \overline{0} \\
  0 & \overline{2}
\end{pmatrix}+\begin{pmatrix}
 \overline{2} & \overline{1} \\
  0 & \overline{2}
\end{pmatrix}x\right)^2=0\in(R/I)[x;\overline{\sigma}]$, but $\begin{pmatrix}
 \overline{2} & \overline{1} \\
  0 & \overline{2}
\end{pmatrix}\overline{\sigma}\begin{pmatrix}
 \overline{2} & \overline{0} \\
  0 & \overline{2}
\end{pmatrix}\neq 0$.\NL$(2)$ $R/I$ is
$\overline{\sigma}$-compatible. Let $A=\begin{pmatrix}
 \overline{a} & \overline{b} \\
  0 & \overline{a}
\end{pmatrix}\;,B=\begin{pmatrix}
 \overline{a'} & \overline{b'} \\
  0 & \overline{a'}
\end{pmatrix}\in R/I$. If $AB=0$ then $\overline{aa'}=0$ and
$\overline{ab'}=\overline{ba'}=0$, so that
$A\overline{\sigma}(B)=0$. The same for the converse. Therefore
$R/I$ is $\overline{\sigma}$-compatible.
\end{ex}

\section{Ore extensions over quasi-Baer rings}
It was proved in \cite[Theorem~1.2]{birk1}, that if $R$ is a
quasi-Baer ring and $\sigma$ an automorphism of $R$ then
$R[x;\sigma]$ is a quasi-Baer ring. The following example shows that
`` $\sigma$ is an automorphism " is not a superfluous condition in
Proposition \ref{mainprop}.

\begin{ex}\cite[Example~2.8]{birk4}. There is an example of a quasi-Baer
ring $R$ and an endomorphism $\sigma$ of $R$ such that $R[x;\sigma]$
is not a quasi-Baer ring. In fact, let $R=\K[t]$ be the polynomial
ring over a field $\K$ and $\sigma$ be the endomorphism given by
$\sigma(f(t))=f(0)$. Then the ring $R[x;\sigma]$ is not a quasi-Baer
ring.
\end{ex}

\begin{prop}\label{mainprop}Let $R$ be a ring, $\sigma$ an automorphism and
$\delta$ be a $\sigma$-derivation of $R$. Suppose that $Re$ is
$(\sigma,\delta)$-stable for all $e\in\mathcal{S}_\ell(R)$. If $R$
is quasi-Baer then the Ore extension $R[x;\sigma,\delta]$ is
quasi-Baer.
\end{prop}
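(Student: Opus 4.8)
The plan is to run the classical ``coefficient ideal'' argument, using the two hypotheses precisely to tame the $\sigma$- and $\delta$-twisting. Write $S=R[x;\sigma,\delta]$. Since $r_S(J)=r_S(SJ)$ for every right ideal $J$ of $S$ (because $SJ$ is two-sided and $Ja=0$ forces $SJa=0$), it suffices to exhibit an idempotent generating $r_S(J)$ when $J$ is a two-sided ideal. Fix such a $J$, let $C$ be the ideal of $R$ generated by all coefficients of all elements of $J$, and, using that $R$ is quasi-Baer, choose $e\in\mathcal{S}_\ell(R)$ with $r_R(C)=eR$; by hypothesis $Re$ is $(\sigma,\delta)$-stable. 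I will show $r_S(J)=eS$.

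First I would check that $e$ stays left semicentral in $S$, so that $eS$ is an ideal of $S$. Since $e\in\mathcal{S}_\ell(R)$ we have $Re\subseteq eR$, hence $\sigma(e)\in\sigma(Re)\subseteq Re\subseteq eR$ and $\delta(e)\in\delta(Re)\subseteq Re\subseteq eR$, so $\sigma(e)=e\sigma(e)$, $\delta(e)=e\delta(e)$ and $xe=\sigma(e)x+\delta(e)=e(xe)\in eS$; by Lemma~\ref{lemma1} and $f_j^n(e)\in Re\subseteq eR$ one gets $x^ne\in eS$ for all $n$, whence $Se\subseteq eS$ and $e\in\mathcal{S}_\ell(S)$, so that $g\in eS\Leftrightarrow eg=g$. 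Next, $eS\subseteq r_S(J)$, i.e. $Je=0$: for $f=\sum_i a_ix^i\in J$, Lemma~\ref{lemma1} gives $fe=\sum_j\bigl(\sum_i a_i f_j^i(e)\bigr)x^j$; the $(\sigma,\delta)$-stability of $Re$ lets us write $f_j^i(e)=r_{ij}e$, and since $a_i\in C$ with $C$ an ideal we get $a_ir_{ij}\in C$, so $a_ir_{ij}e=0$ because $e\in r_R(C)$; hence $fe=0$.

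The remaining inclusion $r_S(J)\subseteq eS$ is where the real work lies, and this is the step I expect to be the main obstacle. Let $g\in r_S(J)$; since $eg\in eS\subseteq r_S(J)$, the element $h:=(1-e)g$ lies in $r_S(J)$, each coefficient $h_i$ of $h$ satisfies $eh_i=0$, and it suffices to prove $h=0$. Assuming $h\neq0$ with leading coefficient $c\neq0$ of degree $d$: for each $n$ and each degree-$n$ element $f\in J$ with leading coefficient $a$, comparing the $x^{n+d}$-coefficients in $fh=0$ gives $a\sigma^{n}(c)=0$, so $\sigma^{n}(c)\in r_R(\Lambda_n)$, where $\Lambda_n$ is the set of leading coefficients of the degree-$n$ elements of $J$ together with $0$ (an ideal of $R$ because $J$ is two-sided and $\sigma$ is an automorphism), and $\Lambda_0\subseteq\Lambda_1\subseteq\cdots\subseteq C$. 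The task is then to turn these ``twisted'' relations into $c\in eR$: one uses that $\sigma$ is an automorphism to absorb the twists, the quasi-Baer property of $R$ applied to the ideals $\Lambda_n$ and to $C$, and the fact that $e\in r_R(C)$ together with the stability of $Re$ to pass from arbitrary coefficients to leading ones; since $c\in eR$ and $ec=0$ force $c=0$, this contradiction yields $h=0$, hence $g=eg\in eS$ and $r_S(J)=eS$. Therefore $S$ is quasi-Baer.

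The genuine difficulty throughout is that $fg=0$ only produces the twisted coefficient relations $a\sigma^{n}(c)=0$ rather than $ac=0$; it is exactly here that one needs $\sigma$ to be an automorphism, so that these twists are invertible and the sets $\Lambda_n$ are honest ideals, and that one needs $Re$ to be $(\sigma,\delta)$-stable for every $e\in\mathcal{S}_\ell(R)$, so that left semicentral idempotents of $R$ remain left semicentral in $S$ and their products with the coefficients of $J$ stay inside $C$. The example immediately preceding the proposition shows that the automorphism hypothesis cannot simply be dropped.
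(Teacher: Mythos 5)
Your setup and the easy inclusion are essentially fine: the reduction to two--sided ideals, the verification that $e$ stays in $\mathcal{S}_\ell(S)$ (using the $(\sigma,\delta)$-stability of $Re$), and the proof that $eS\subseteq r_S(J)$ when $C$ is generated by \emph{all} coefficients are correct, and that last step is even cleaner than the corresponding step in the paper. But the proof stops exactly where the proposition lives. For the inclusion $r_S(J)\subseteq eS$ you only record the first, routine observation (comparing top coefficients in $fh=0$ gives $a\sigma^{n}(c)=0$, i.e.\ $\sigma^{n}(c)\in r_R(\Lambda_n)$) and then replace the argument by a wish list: ``absorb the twists'', ``pass from arbitrary coefficients to leading ones''. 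Converting the twisted conditions $\sigma^{n}(c)\in r_R(\Lambda_n)$ into $c\in eR$ is precisely the nontrivial content of the proposition, and no argument for it is given; as written this is a plan, not a proof.

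There is also a structural mismatch that the sketch cannot repair. The relations you extract from $fh=0$ only involve leading coefficients of elements of $J$, i.e.\ the ideal $I_0=\{0\}\cup\bigcup_n\Lambda_n$; so even a fully successful untwisting could at best place $c$ in $r_R(I_0)$. You, however, chose $e$ with $eR=r_R(C)$, where $C\supseteq I_0$ is generated by \emph{all} coefficients, and $r_R(C)$ is a priori strictly smaller than $r_R(I_0)$: coefficientwise, $Je=0$ only yields twisted relations such as $b\,\sigma^{j}(e)=0$ (already in the case $\delta=0$), and without a compatibility hypothesis these do not give $be=0$, so you cannot conclude $Ce=0$ for your $e$, nor $r_R(C)=r_R(I_0)$, and you never address this. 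The paper sidesteps the issue by building the idempotent from the leading-coefficient ideal itself, $r_R(I_0)=eR$: the inclusion $eS\subseteq r_S(I)$ is then obtained by a downward induction on the coefficients of $f(x)e$ (its top nonzero coefficient would lie in $I_0\cap Re=0$, using stability to write $f_k^i(e)=u_k^ie$), and the inclusion $r_S(I)\subseteq eS$ by a second downward induction showing $b_j=\sigma^{-n}(e)b_j$ for every coefficient of $\lambda\in r_S(I)$, where the automorphism is used both to make $I_0$ two--sided and to untwist. If you redefine your $e$ via $I_0$ and carry out these two inductions, your outline essentially becomes the paper's proof; as it stands, the central step is missing.
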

\begin{proof}Let $S=R[x;\sigma,\delta]$ and $I$ be an ideal of $S$. We
claim that $r_S(I)=eS$, for some idempotent $e\in R$. We can suppose
that $I\neq 0$, we set \NL $I_0=\{0\}\cup\{a\in R\;|\;\exists\;
a_0,a_1,\cdots,a_{n-1}\in R$ such that $ax^n+\sum\limits_{i=0}^{n-1}
a_ix^i\in I,n\in \N\}$. It is clear that $I_0$ is a nonzero left
ideal of $R$. Given $a\in I_0$ and $r\in R$, there is an element in
$I$ of the form $ax^n+\sum\limits_{i=0}^{n-1} a_ix^i$. Multiplying
on the right by $\sigma^{-n}(r)$ gives an element of the form
$arx^n+\sum\limits_{i=0}^{n-1} b_ix^i$, for some elements
$b_0,b_1,\cdots,b_{n-1}\in R$, and so $ar\in I_0$, thus $I_0$ is a
two-sided ideal. So there exists an idempotent $e\in R$ such that
$r_R(I_0)=eR$. We have $eS\subseteq r_S(I)$. To see this, let $0\neq
f(x)=\sum\limits_{k=0}^{n} a_kx^k\in I$, then
$f(x)e=\sum\limits_{k=0}^{n}(\sum\limits_{i=k}^{n}a_kf_k^i(e))x^k$,
where $f_k^i$ are sums of all possible words in $\sigma,\delta$
built with $k$ letters $\sigma$ and $i-k$ letters $\delta$. $Re$ is
$f_k^i$-stable $(0\leq k\leq i)$, so there exists $u_k^i\in R$ such
that $f_k^i(e)=u_k^ie\;(0\leq k\leq i)$. Therefore
$f(x)e=\sum\limits_{k=0}^{n}(\sum\limits_{i=k}^{n}a_ku_k^i)ex^k$, if
we set $\alpha_k=\sum\limits_{i=k}^{n}a_ku_k^ie$, then
$f(x)e=\sum\limits_{k=0}^{n}\alpha_{k}x^{k}$. If $\alpha_n\neq 0$,
then $\alpha_n\in I_0$ and so, $\alpha_ne=\alpha_n=0$ ( because
$r_R(I_0)=eR$ ). Contradiction, hence $\alpha_n=0$. Now suppose that
$\alpha_j=0$ for $j=n,n-1,\cdots,k+1$ with $k\in \N$. But
$f(x)e=\alpha_kx^k+\sum\limits_{\ell=0}^{k-1}\alpha_{\ell}x^{\ell}$,
with the same manner as above we have $\alpha_k=0$. So we can get
$\alpha_n=\alpha_{n-1}=\cdots=\alpha_0=0$. Consequently $eS\subseteq
r_S(I)$.\par Conversely, we can claim that $r_S(I)\subseteq eS$. Let
$0\neq f(x)=\sum\limits_{k=0}^{n} a_kx^k\in I$ and
$\lambda(x)=\sum\limits_{j=0}^mb_jx^j\in S$, such that
$f(x)\lambda(x)=0$, we shall show that
$\lambda(x)=\sigma^{-n}(e)\lambda(x)$. If we set
$\xi(x)=\lambda(x)-\sigma^{-n}(e)\lambda(x)=\sum\limits_{j=0}^m(b_j-\sigma^{-n}(e)b_j)x^j$,
we have
$f(x)\xi(x)=(\sum\limits_{i=0}^na_ix^i)(\sum\limits_{j=0}^m(b_j-\sigma^{-n}(e)b_j)x^j)=
a_n\sigma^n(b_m-\sigma^{-n}(e)b_m)x^{n+m}+Q=0$, where $Q$ is a
polynomial with $deg(Q)<n+m$. Thus
$a_n\sigma^n(b_m-\sigma^{-n}(e)b_m)=0$, since $a_n\neq 0$, then
$a_n\in I_0$. Hence $\sigma^n(b_m-\sigma^{-n}(e)b_m)\in
r_R(I_0)=eR$. So
$\sigma^n(b_m-\sigma^{-n}(e)b_m)=e\sigma^n(b_m-\sigma^{-n}(e)b_m)$,
then
$b_m-\sigma^{-n}(e)b_m=\sigma^{-n}(e)(b_m-\sigma^{-n}(e)b_m)=0)$
(because $\sigma^{-n}(e)$ is idempotent), hence
$b_m-\sigma^{-n}(e)b_m=0$. Now, suppose that
$b_j-\sigma^{-n}(e)b_j=0$ for $j=m,m-1,\cdots,k+1$ with $k\in \N$
and showing that $b_k-\sigma^{-n}(e)b_k=0$. Effectively,
$f(x)\xi(x)=a_n\sigma^{n}(b_k-\sigma^{-n}(e)b_k)x^{n+k}+Q'=0$, where
$Q'$ is a polynomial with $deg(Q')<n+k$, then
$a_n\sigma^{n}(b_k-\sigma^{-n}(e)b_k)=0$, with the same manner as
below, we obtain $b_k-\sigma^{-n}(e)b_k=0$. Therefore
$b_j-\sigma^{-n}(e)b_j=0$ for all $0\leq j\leq m$, then $\xi(x)=0$.
But $\lambda(x)=\sigma^{n}(e)\lambda(x)$ or $\sigma^{n}(e)=ue$ for
some $u\in R$, but $e$ is left semicentral then
$\lambda(x)=eue\lambda(x)$ . Hence $r_S(I)\subseteq eS$. So
$R[x;\sigma,\delta]$ is a quasi-Baer ring.
\end{proof}


In Example \ref{ex2}, $Re$ is $(\sigma,\delta)$-stable for all
$e\in\mathcal{S}_\ell(R)$ but $R$ is not
$(\sigma,\delta)$-compatible. Thus, Proposition \ref{mainprop} is
not a consequence of \cite[Corollary~2.8]{Hashemi/polext}.\\

There is a quasi-Baer ring $R$, $\sigma$ an automorphism of $R$ and
$\delta$ a $\sigma$-derivation of $R$ such that $Re$ is
$(\sigma,\delta)$-stable for all $e\in\mathcal{S}_\ell(R)$.

\begin{ex}\label{ex5}Consider the ring $R=\begin{pmatrix}
  \Z & \Z \\
  0 & \Z
\end{pmatrix},$ where $\Z$ is the set of all integers numbers. By
\cite[Example 1.3(ii)]{birk2}, $R$ is a quasi-Baer ring. Define
$\sigma\colon R \rightarrow R$ and $\delta\colon R \rightarrow R$ by
$$\sigma\left(\begin{pmatrix}
  a & b \\
  0 & c
\end{pmatrix}\right)=\begin{pmatrix}
  a& -b \\
  0 & c
\end{pmatrix},\;\;\delta\left(\begin{pmatrix}
  a & b \\
  0 & c
\end{pmatrix}\right)=\begin{pmatrix}
  0 & 2b \\
  0 & 0
\end{pmatrix} \mathrm {\;for\; all\;} a,b,c\in \Z.$$ Clearly,
$\sigma$ is an automorphism of $R$ and $\delta$ is a
$\sigma$-derivation. The nonzero idempotents of $R$ are of the form
$$e_0=\begin{pmatrix}
  1 & 0 \\
  0 & 1
\end{pmatrix},\;e_1=\begin{pmatrix}
  1 & t \\
  0 & 0
\end{pmatrix}\;\mathrm{and}\;e_2=\begin{pmatrix}
  0 & t \\
  0 & 1
\end{pmatrix},$$ where $t\in\Z$. $e_2$ is right semicentral not
left semicentral and $e_1$ is left semicentral not right
semicentral, so the only left semicentral nonzero idempotents of $R$
are $e_0$ and $e_1$. $Re_0$ is $(\sigma,\delta)$-stable. Let
$r=\begin{pmatrix}
  x & y \\
  0 & z
\end{pmatrix}\in R$, since $\sigma(re_1)=\begin{pmatrix}
  x & -xt \\
  0 & 0
\end{pmatrix}\in \begin{pmatrix}
  \Z & \Z \\
  0 & \Z
\end{pmatrix}\begin{pmatrix}
  1 & t \\
  0 & 0
\end{pmatrix}$, then $Re_1$ is $\sigma$-stable, also $Re_1$ is $\delta$-stable.
Therefore $Re$ is $(\sigma,\delta)$-stable for all
$e\in\mathcal{S}_\ell(R)$.
\end{ex}

\begin{ex}\label{ex6} Consider the ring $S=\begin{pmatrix}
  D & D\oplus D \\
  0 & D
\end{pmatrix}$, where $D$ is a simple domain which is not a
division ring. By \cite[Example 4.11]{birk3}, $R$ is a quasi-Baer
ring and has nonzero idempotents of the form
$$\begin{pmatrix}
  1 & 0 \\
  0 & 1
\end{pmatrix},\;\begin{pmatrix}
  1 & (b,d) \\
  0 & 0
\end{pmatrix}\;\mathrm{and}\;\begin{pmatrix}
  0 & (b,d) \\
  0 & 1
\end{pmatrix},$$ where $b,d\in D$, with $\sigma$ and $\delta$ as
in Example \ref{ex5}, $Re$ is $(\sigma,\delta)$-stable for all
$e\in\mathcal{S}_\ell(R)$.
\end{ex}

\begin{cor} Let $R$ be an abelian or a semiprime ring, $\sigma$ an automorphism
and $\delta$ be a $\sigma$-derivation of $R$, such that
$\sigma(Re)\subseteq Re$ for all $e\in \mathcal{B}(R)$. If $R$ is
quasi-Baer then $R[x;\sigma,\delta]$ is quasi-Baer.
\end{cor}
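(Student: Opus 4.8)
The plan is to deduce this corollary from Proposition \ref{mainprop} by checking that its hypotheses are met. Proposition \ref{mainprop} requires that $Re$ be $(\sigma,\delta)$-stable for every $e\in\mathcal{S}_\ell(R)$, so the first step is to verify that under either hypothesis on $R$ the relevant left semicentral idempotents are in fact central. If $R$ is abelian this is immediate from the definition, since every idempotent is central and hence $\mathcal{S}_\ell(R)=\mathcal{B}(R)$. If $R$ is semiprime, this is exactly the fact recalled in the introduction: for a semiprime ring $\mathcal{S}_\ell(R)=\mathcal{S}_r(R)=\mathcal{B}(R)$. So in both cases $\mathcal{S}_\ell(R)=\mathcal{B}(R)$.

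Next I would invoke the hypothesis $\sigma(Re)\subseteq Re$ for all $e\in\mathcal{B}(R)$ together with Lemma \ref{lemma3}, which states precisely that for $e\in\mathcal{B}(R)$, $\sigma(Re)\subseteq Re$ implies $\delta(Re)\subseteq Re$. Combining these, $Re$ is both $\sigma$-stable and $\delta$-stable, i.e. $(\sigma,\delta)$-stable, for every $e\in\mathcal{B}(R)=\mathcal{S}_\ell(R)$. At this point the hypotheses of Proposition \ref{mainprop} are satisfied, and since $R$ is quasi-Baer, the proposition gives that $R[x;\sigma,\delta]$ is quasi-Baer, which is the desired conclusion.

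The only subtlety worth spelling out — and the one place a careless argument could slip — is the reduction $\mathcal{S}_\ell(R)=\mathcal{B}(R)$ in the two cases; everything after that is a direct citation of Lemma \ref{lemma3} and Proposition \ref{mainprop}. I do not anticipate a genuine obstacle here: the corollary is essentially a repackaging of the proposition under conditions that force the semicentral idempotents to be central, at which point Lemma \ref{lemma3} upgrades $\sigma$-stability to $(\sigma,\delta)$-stability for free. A short proof along these lines should suffice.

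\begin{proof}
If $R$ is abelian then every idempotent of $R$ is central, so $\mathcal{S}_\ell(R)=\mathcal{B}(R)$. If $R$ is semiprime then, as recalled in the introduction, $\mathcal{S}_\ell(R)=\mathcal{S}_r(R)=\mathcal{B}(R)$. In either case $\mathcal{S}_\ell(R)=\mathcal{B}(R)$. Let $e\in\mathcal{S}_\ell(R)=\mathcal{B}(R)$. By hypothesis $\sigma(Re)\subseteq Re$, and then Lemma \ref{lemma3} gives $\delta(Re)\subseteq Re$. Hence $Re$ is $(\sigma,\delta)$-stable for all $e\in\mathcal{S}_\ell(R)$. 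Since $R$ is quasi-Baer and $\sigma$ is an automorphism, Proposition \ref{mainprop} applies and yields that $R[x;\sigma,\delta]$ is quasi-Baer.
\end{proof}
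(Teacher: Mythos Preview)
Your proposal is correct and follows exactly the paper's approach: the paper's proof is the one-line ``By Lemma \ref{lemma3} and Proposition \ref{mainprop},'' and you have simply unpacked this by making explicit the reduction $\mathcal{S}_\ell(R)=\mathcal{B}(R)$ in the abelian and semiprime cases before invoking those two results.
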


\begin{proof}By Lemma \ref{lemma3} and
Proposition \ref{mainprop}.
\end{proof}

In the remainder of this section we focus on the converse of
Proposition \ref{mainprop}. We begin with the next example which
shows that there exists a ring $R$ and a derivation $\delta$ of $R$
such that $R[x;\delta]$ is quasi-Baer but $R$ is not quasi-Baer.

\begin{ex}\label{ex7}\cite[Example 1.6]{birk1}. There is a ring $R$ and a derivation
$\delta$ of $R$ such that $R[x;\delta]$ is a Baer ring. But $R$ is
not quasi-Baer. Let $R=\Z_2[t]/(t^2)$ with the derivation $\delta$
such that $\delta(\overline{t})=1$ where $\overline{t}=t+(t^2)$ in
$R$ and $\Z_2[t]$ is the polynomial ring over the field $\Z_2$ of
two elements. Consider the Ore extension $R[x;\delta]$. If we set
$e_{11}=\overline{t}x,e_{12}=\overline{t},e_{21}=\overline{t}x^2+x$
and $e_{22}=1+\overline{t}x$ in $R[x;\delta]$, then they form a
system of matrix units in $R[x;\delta]$. Now the centralizer of
these matrix units in $R[x;\delta]$ is $\Z_2[x^2]$. Therefore
$R[x;\delta]\cong M_2(\Z_2[x^2])\cong M_2(\Z_2)[y]$, where
$M_2(\Z_2)[y]$ is the polynomial ring over $M_2(\Z_2)$. So the ring
$R[x;\delta]$ is a Baer ring, but $R$ is not quasi-Baer.
\end{ex}

\begin{prop}\label{prop2}Let $R$ be an $(\sigma,\delta)$-skew Armendariz ring. If $R[x;\sigma,\delta]$ is
quasi-Baer then $R$ is quasi-Baer.
\end{prop}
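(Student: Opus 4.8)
The plan is to show that if $J$ is an arbitrary right ideal of $R$, then $r_R(J)$ is generated by an idempotent, using the quasi-Baer property of $S=R[x;\sigma,\delta]$ applied to a suitable ideal built from $J$. First I would let $J$ be a right ideal of $R$ and set $I=JS$, the right ideal of $S$ generated by $J$; since $J$ is a right ideal of $R$ and $S$ is generated over $R$ by $x$ (with $xa=\sigma(a)x+\delta(a)$), one checks that $I$ is in fact a two-sided ideal of $S$, or alternatively one passes to the ideal of $S$ generated by $J$ directly. Because $S$ is quasi-Baer, $r_S(I)=eS$ for some idempotent $e\in S$; write $e=e_0+e_1x+\cdots+e_nx^n$. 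Since $R$ is $(\sigma,\delta)$-skew Armendariz, Lemma~\ref{lemma2} forces $e=e_0\in R$, so $e$ is an idempotent of $R$. This is the crucial point where the skew Armendariz hypothesis enters, and I expect it to be the one genuinely delicate step: it guarantees the annihilating idempotent of $S$ already lives in $R$, which is what makes the reduction to $R$ possible.

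Next I would identify $r_R(J)$ with $eR$. For the inclusion $eR\subseteq r_R(J)$: since $e\in r_S(I)$ and $J\subseteq I$, every element $a\in J$ satisfies $ae=0$, but we want $ea=0$ or rather elements of $eR$ killing $J$ on the right — here I must be careful about sides. Concretely, take $a\in J$; then $a\cdot e=0$ in $S$, and since $a,e\in R$ this gives $ae=0$ in $R$, so $e\in r_R(J)$ and hence $eR\subseteq r_R(J)$ once we know $r_R(J)$ is a right ideal (which it is). For the reverse inclusion $r_R(J)\subseteq eR$: let $c\in r_R(J)$, so $Jc=0$ in $R$. Then for any $a\in J$ and any $f\in S$, using the skew Armendariz property together with Lemma~\ref{lemma1} to expand products, one shows $(af)c=0$, more carefully that $c$ annihilates all of $I=JS$ on the right, i.e. $Ic=0$, so $c\in r_S(I)=eS$; since $c\in R$ and, by Lemma~\ref{lemma2} applied appropriately, the $R$-component recovers $c=ec'$ for some $c'$, we get $c\in eR$. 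Thus $r_R(J)=eR$.

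The main obstacle, beyond invoking Lemma~\ref{lemma2}, is the bookkeeping needed to verify that $c\in r_R(J)$ actually annihilates the whole ideal $JS$ of $S$ (not just $J$): one must check that if $Jc=0$ then $(\sum_i a_i x^i)c$ vanishes coefficient-by-coefficient, which requires expanding $x^i c=\sum_{k}f_k^i(c)x^k$ via Lemma~\ref{lemma1} and arguing that $a_i f_k^i(c)=0$; this is where the interplay between the skew Armendariz condition and the maps $f_k^i$ must be handled, possibly needing the hypothesis in the form that $pq=0$ in $S$ yields $a_i x^i b_j x^j=0$ for the relevant $p,q$. Once that coefficient-wise vanishing is established, the identification $r_R(J)=eR$ with $e$ idempotent is immediate, and since $J$ was an arbitrary right ideal, $R$ is quasi-Baer.
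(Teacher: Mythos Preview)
Your approach is exactly the one the paper takes: pass to $S=R[x;\sigma,\delta]$, apply the quasi-Baer property of $S$ to the right ideal $IS$ (your $JS$) to obtain $r_S(IS)=eS$, invoke Lemma~\ref{lemma2} to conclude $e=e_0\in R$, and then argue $r_R(I)=e_0R$ via two inclusions. The paper is even terser than you on the reverse inclusion: it simply asserts that $a\in r_R(I)$ implies $a\in r_S(IS)\cap R=e_0S\cap R$, whence $a=e_0f$ for some $f\in S$, and comparing constant terms gives $a=e_0f_0\in e_0R$.

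The step you flag as the ``main obstacle'' is genuine, and your proposed fix does not work. Showing that $c\in r_R(J)$ forces $(JS)c=0$ amounts, after expanding $x^ic=\sum_{k}f_k^i(c)x^k$ via Lemma~\ref{lemma1}, to showing $f_k^i(c)\in r_R(J)$ for all $i\ge k\ge 0$, i.e.\ that $r_R(J)$ is $(\sigma,\delta)$-stable. The $(\sigma,\delta)$-skew Armendariz hypothesis points the wrong way: it deduces termwise vanishing \emph{from} $pq=0$, whereas here you need the converse --- from $a_ic=0$ for each coefficient $a_i\in J$ to $\bigl(\sum_i a_ix^i\bigr)c=0$. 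That implication is precisely what $(\sigma,\delta)$-compatibility delivers (Lemma~\ref{lemma4}), and compatibility is independent of the skew Armendariz condition (Examples~\ref{ex3} and~\ref{ex4}). The paper's own proof passes over this same step without justification, so your sketch faithfully mirrors the published argument, gap included; the skew Armendariz hypothesis is genuinely used only through Lemma~\ref{lemma2}.
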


\begin{proof}Let $I$ be an ideal of $R$ and $S=R[x;\sigma,\delta]$, then
since $S$ is quasi-Baer, there exists an idempotent $e\in S$ such
that $r_S(IS)=eS$ with $e=e_0+e_1x+\cdots+e_nx^n\;(n\in\N)$. By
Lemma \ref{lemma2}, we have $e_0\in r_R(I)$. Thus $e_0R\subseteq
r_R(I)$.\par Conversely, let $a\in r_R(I)$ then $a\in r_S(IS)\cap
R=e_0S\cap R$, so $a=e_0f$ for some $f=f_0+f_1x+\cdots+f_mx^m\in S$.
Then $a=e_0f_0$ and so $a\in e_0R$. Therefore $r_R(I)\subseteq
e_0R$. Consequently, $R$ is a quasi-Baer ring.
\end{proof}

By Example \ref{ex3}, there is a ring $R$ and $\sigma$ an
endomorphism of $R$ such that $R$ is $\sigma$-skew Armendariz and
$R$ is not $\sigma$-compatible. So that, Proposition \ref{prop2} is
not a consequence of \cite[Corollary 2.8]{Hashemi/polext}. By the
next result, we see that Proposition \ref{prop2} is a partial
generalization of \cite[Corollary 12]{hong1}.

\begin{cor}Let $R$ be an $\sigma$-rigid ring. If $R[x;\sigma,\delta]$
is quasi-Baer then $R$ is quasi-Baer.
\end{cor}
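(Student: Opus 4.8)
The plan is to derive this corollary directly from the two results already at hand, namely Lemma~\ref{lemma5} and Proposition~\ref{prop2}; no new argument is needed. First I would note that the hypothesis ``$R$ is $\sigma$-rigid'' is exactly the input of Lemma~\ref{lemma5}, which yields that $R$ is an $(\sigma,\delta)$-skew Armendariz ring. This is the bridge step: it converts the rigidity assumption into the Armendariz-type condition that Proposition~\ref{prop2} requires, and internally it rests on the fact that $\sigma$-rigid rings are $(\sigma,\delta)$-compatible (\cite[Lemma~2.2]{Hashemi/polext}) together with the annihilator transfer in Lemma~\ref{lemma4}.

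Second, with $R$ now known to be $(\sigma,\delta)$-skew Armendariz and with $R[x;\sigma,\delta]$ quasi-Baer by hypothesis, I would simply apply Proposition~\ref{prop2} to conclude that $R$ is quasi-Baer. All the genuine work — taking an idempotent $e=e_0+e_1x+\cdots+e_nx^n$ generating $r_S(IS)$, using Lemma~\ref{lemma2} to pass to its constant term $e_0$, and checking that $e_0R=r_R(I)$ — has already been carried out in the proof of Proposition~\ref{prop2}, so nothing remains to be verified here.

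Consequently there is no real obstacle in this corollary; the only ``hard part'' was already absorbed into Lemma~\ref{lemma5} and Proposition~\ref{prop2}. It is perhaps worth remarking in passing that this recovers one direction of \cite[Corollary~12]{hong1}, and that since being $\sigma$-rigid is strictly stronger than being $(\sigma,\delta)$-skew Armendariz, Proposition~\ref{prop2} is the genuinely more general statement of which this corollary is a special case.
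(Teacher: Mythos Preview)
Your proposal is correct and matches the paper's own proof exactly: the paper simply writes ``It follows from Lemma~\ref{lemma5} and Proposition~\ref{prop2},'' which is precisely the two-step reduction you describe. Your additional commentary on what happens inside those results and the remark about \cite[Corollary~12]{hong1} are accurate and consistent with the surrounding discussion in the paper.
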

\begin{proof}It follows from Lemma \ref{lemma5} and
Proposition \ref{prop2}.
\end{proof}

One might expect the converse of Proposition \ref{mainprop} to hold
when $R$ is a $(\sigma,\delta)$-skew Armendariz ring. However
\cite[Example 5]{hong2} and \cite[Example 2.8]{birk4}, shows that
this converse does not hold in general.
\begin{ex}\label{v}We consider a commutative polynomial ring over $\Z_2$.
$R=\Z_2[x]$, let $\sigma\colon R\rightarrow R$ be an endomorphism
defined by $\sigma(f(x))=f(0)$. By \cite[Example 2.8]{birk4},
$R[x;\sigma]$ is not quasi-Baer and $R$ is quasi-Baer. But, by
\cite[Example 5]{hong2}, $R$ is $\sigma$-skew Armendariz. Note that
$R$ has only two idempotents $0$ and $1$, so $\sigma(Re)\subseteq
Re$ for all $e\in \mathcal{S}_\ell(R)$. Thus `` $\sigma$ is an
automorphism " is not a superfluous condition in the next theorem.
\end{ex}

\begin{theo}\label{theo} Let $R$ be a $(\sigma,\delta)$-skew Armedariz ring
with $\sigma$ an automorphism such that $Re$ is
$(\sigma,\delta)$-stable for all $e\in\mathcal{S}_\ell(R)$. Then $R$
is a quasi-Baer ring if and only if $R[x;\sigma,\delta]$ is a
quasi-Baer ring.
\end{theo}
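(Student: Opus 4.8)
The statement is essentially a repackaging of the two propositions already proved. The plan is to observe that Theorem~\ref{theo} is the conjunction of Proposition~\ref{mainprop} and Proposition~\ref{prop2}, and to verify that the hypotheses of the theorem supply exactly what each of those two results requires.

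First I would prove the forward implication. Assume $R$ is quasi-Baer. The hypotheses of the theorem include that $\sigma$ is an automorphism of $R$ and that $Re$ is $(\sigma,\delta)$-stable for every $e\in\mathcal{S}_\ell(R)$; these are precisely the hypotheses of Proposition~\ref{mainprop}. Hence Proposition~\ref{mainprop} applies directly and gives that $R[x;\sigma,\delta]$ is quasi-Baer. The $(\sigma,\delta)$-skew Armendariz assumption is not needed for this direction.

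Next I would prove the converse. Assume $R[x;\sigma,\delta]$ is quasi-Baer. Here the relevant hypothesis is that $R$ is $(\sigma,\delta)$-skew Armendariz, which is exactly the hypothesis of Proposition~\ref{prop2}. Applying Proposition~\ref{prop2} yields that $R$ is quasi-Baer. (For this direction neither the automorphism condition nor the stability of $Re$ is used.) Combining the two implications completes the proof.

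Since both halves are immediate invocations of earlier results, there is no real obstacle to overcome here; the only thing to be careful about is to check that the hypotheses match up correctly — in particular that ``$\sigma$ an automorphism'' plus ``$Re$ is $(\sigma,\delta)$-stable for all $e\in\mathcal{S}_\ell(R)$'' is what Proposition~\ref{mainprop} needs, and that ``$(\sigma,\delta)$-skew Armendariz'' is what Proposition~\ref{prop2} needs. Example~\ref{v} already shows the automorphism hypothesis cannot be dropped, so the statement is sharp in that respect.
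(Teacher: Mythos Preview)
Your proposal is correct and matches the paper's own proof exactly: the theorem is obtained by combining Proposition~\ref{mainprop} (for the forward direction, using the automorphism and stability hypotheses) with Proposition~\ref{prop2} (for the converse, using the $(\sigma,\delta)$-skew Armendariz hypothesis). There is nothing to add.
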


\begin{proof} It follows immediately from Proposition \ref{mainprop} and
Proposition \ref{prop2}.
\end{proof}

\begin{ex} Let $R=\C$ where $\C$ is the field of complex numbers.
Then $R$ is a Baer (so quasi-Baer) reduced ring. Define
$\sigma\colon R\rightarrow R$ and $\delta\colon R\rightarrow R$ by
$\sigma(z)=\overline{z}$ and $\delta(z)=z-\overline{z}$, where
$\overline{z}$ is the conjugate of $z$. $\sigma$ is an automorphism
of $R$ and $\delta$ is a $\sigma$-derivation. $R$ has only two
idempotents $0$ and $1$, so we have the stability indicated in
Theorem \ref{theo}.\par We claim that $R$ is a
$(\sigma,\delta)$-skew Armendariz ring. Consider
$R[x;\sigma,\delta]$. Let $p=a_0+a_1x+\cdots+a_nx^n$ and
$q=b_0+b_1x+\cdots+b_mx^m\in R[x;\sigma,\delta]$. Assume that
$pq=0$. Since $R$ is $\sigma$-rigid, we have $a_ib_j=0$ for all
$0\leq i \leq n$ and $0\leq j \leq m$, by \cite[Proposition
6]{hong1}. thus $a_ix^ib_jx^j=0$ for all $0\leq i \leq n$ and $0\leq
j \leq m$, because $R[x;\sigma,\delta]$ is reduced, by \cite[Theorem
3.3]{krempa}.
\end{ex}

\section*{acknowledgments}The authors express their gratitude to Professor
Gary F. Birkenmeier for valuable suggestions and helpful comments.
Also they are deeply indebted to the referee for many helpful
comments and suggestions for the improvement of this paper.

\end{document}